\newtheorem{theorem}{Theorem}
\newtheorem{corr}{Corollary}
\newtheorem{lemma}{Lemma}
\newtheorem{remark}{Remark}
\theoremstyle{definition}
\newcommand{\Gal}{\mathrm{Gal}}
\newcommand{\SL}{\mathrm{SL}}
\newcommand{\GL}{\mathrm{GL}}
\begin{document}
\title[The Density of a family of monogenic number fields]{The Density of a family of monogenic number fields}
\author[M. Bardestani]{Mohammad Bardestani}
\address{D\'{e}partement de math\'{e}matiques et de statistique
Universit\'{e} de Montr\'{e}al
CP 6128, succ. Centre-ville
Montr\'{e}al, Qu\'{e}bec H3C 3J7
Canada.
}
\address{
Current address:  Department of Mathematics and Statistics, University of Ottawa, 585 King Edward, Ottawa, ON K1N 6N5, Canada.}
\email{mbardest@uottawa.ca}
\date{}
\begin{abstract}
A monogenic polynomial $f$ is a monic irreducible polynomial with integer coefficients which produces a monogenic number field. For a given prime $q$, using the Chebotarev density theorem, we will show the density of primes $p$ such that $t^q-p$ is monogenic is bigger or equal than $(q-1)/q$. We will also prove that, when $q=3$, the density of primes $p$ which $\mathbb{Q}(\sqrt[3]{p})$ is non-monogenic is at least $1/9$.
\end{abstract}
\keywords{Chebotarev density theorem, Monogenic field, Thue equation.}
\subjclass{Primary 11R04 ; Secondary 11R45}
\maketitle
\section{Introduction}
Let $K$ be an algebraic number field of degree $n$ and $\mathcal{O}_K$ denote its ring of integers. We call $K$ {\bf monogenic} if there exists an element $\alpha\in \mathcal{O}_K$  such that $\mathcal{O}_K=\mathbb{Z}[\alpha]$. It is a classical problem in algebraic number theory to identify if a number field $K$ is monogenic or not. Hasse (see~\cite{Gy}, page 193) asked if one could give an arithmetic characterization of monogenic number fields. The quadratic and cyclotomic number fields are monogenic, but this is not the case in general. Dedekind (see~\cite{Narkiewicz}, page 64) was the first who noticed this by giving an example of a cubic field generated by a root of $t^3-t^2-2t-8$. Let $f(t)\in\mathbb{Z} [t]$ be a monic irreducible polynomial. We call $f(t)$ {\bf monogenic} if $\mathbb{Q}[t]/(f(t))\cong\mathbb{Q}(\alpha)$ is monogenic, where $\alpha$ is a root of $f(t)$. In this paper, we will study the distribution of a family of monogenic polynomials.
\begin{theorem}\label{main}
Let $p$ and $q$ be prime numbers, where $q\geq 3$. Consider the polynomial $f_p(t):=t^q-p$.
Then, we have
$$
\liminf_{x\to\infty}\frac{1}{\pi(x)}\#\{p\leq x: f_p(t) \text{\, is monogenic}\}\geq\frac{q-1}{q},
$$
where $\pi(x)$ denotes the number of primes less than $x$.
\end{theorem}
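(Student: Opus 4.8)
The plan is to convert the monogenicity of $f_p(t)=t^q-p$ into an elementary congruence condition on $p$ modulo $q^2$, and then to count the primes that satisfy it. First note that $f_p$ is irreducible by Eisenstein's criterion at $p$, and that a direct computation gives $\mathrm{disc}(f_p)=\pm q^q p^{q-1}$; hence the only rational primes that can divide the index $[\mathcal{O}_K:\mathbb{Z}[\alpha]]$, where $\alpha$ is a root of $f_p$ and $K=\mathbb{Q}(\alpha)$, are $p$ and $q$. The prime $p$ is harmless: since $f_p$ is $p$-Eisenstein, the order $\mathbb{Z}[\alpha]$ is already $p$-maximal, so $p\nmid[\mathcal{O}_K:\mathbb{Z}[\alpha]]$. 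Thus $f_p$ is monogenic if and only if $q\nmid[\mathcal{O}_K:\mathbb{Z}[\alpha]]$, and after discarding the single prime $p=q$ (which does not affect the density) we may assume $p\neq q$.

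The next step is to apply Dedekind's criterion at $q$. Because $q$-th powering is the identity on $\mathbb{F}_q$, we have $t^q-p\equiv (t-p)^q \pmod q$, so modulo $q$ the polynomial $f_p$ is the perfect $q$-th power of the linear factor $t-\overline{p}$. Writing $f_p(t)=(t-p)^q+q\,T(t)$ with $T(t)=\big(t^q-p-(t-p)^q\big)/q\in\mathbb{Z}[t]$, Dedekind's criterion says that $q\mid[\mathcal{O}_K:\mathbb{Z}[\alpha]]$ precisely when $t-\overline{p}$ divides $\overline{T}$ in $\mathbb{F}_q[t]$, i.e. when $T(p)\equiv 0\pmod q$. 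Since $T(p)=(p^q-p)/q$, this is exactly the condition $q^2\mid p^q-p$. Hence, for $p\neq q$,
$$
f_p(t)\ \text{is monogenic}\iff p^q\not\equiv p\pmod{q^2}\iff p^{q-1}\not\equiv 1\pmod{q^2};
$$
for the theorem only the implication "$p^{q-1}\not\equiv 1\pmod{q^2}\Rightarrow f_p$ monogenic" is needed.

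It then remains to bound the density of primes with $p^{q-1}\equiv 1\pmod{q^2}$. Since $(\mathbb{Z}/q^2\mathbb{Z})^{\times}$ is cyclic of order $q(q-1)$, the equation $x^{q-1}=1$ has exactly $\gcd(q-1,q(q-1))=q-1$ solutions there, forming the unique subgroup of index $q$. Thus a prime $p\nmid q$ satisfies $p^{q-1}\equiv 1\pmod{q^2}$ if and only if its Frobenius in $\mathrm{Gal}(\mathbb{Q}(\zeta_{q^2})/\mathbb{Q})$ lies in that subgroup, equivalently if and only if $p$ splits completely in the unique degree-$q$ subfield $L\subset\mathbb{Q}(\zeta_{q^2})$. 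By the Chebotarev density theorem (indeed, already by Dirichlet on primes in arithmetic progressions modulo $q^2$), the set of such primes has density $(q-1)/\big(q(q-1)\big)=1/q$. Consequently the set of primes $p$ for which $f_p$ is monogenic has density at least $1-1/q=(q-1)/q$, which yields the asserted bound on the $\liminf$ (and in fact equality, since this is a genuine density).

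As for the main obstacle: the density count is entirely routine once the congruence criterion is established, so essentially all the content is in the Dedekind-criterion computation—correctly recognizing the factorization $\overline{f_p}=(t-\overline p)^q$ over $\mathbb{F}_q$, forming the auxiliary polynomial $T$, and checking that the coprimality condition collapses to the single evaluation $T(p)\bmod q=(p^q-p)/q\bmod q$—together with the careful bookkeeping that confines the possible index-dividing primes to $p$ and $q$. The appeal to Chebotarev (or Dirichlet) is then used as a black box converting the density $1/q$ of "bad" residue classes modulo $q^2$ into the density $1/q$ of "bad" primes.
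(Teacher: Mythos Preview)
Your proof is correct and follows essentially the same architecture as the paper's: confine the possible index divisors to $p$ and $q$ via the discriminant, dispose of $p$ by the Eisenstein property, reduce the $q$-part to the congruence $p^{q-1}\not\equiv 1\pmod{q^2}$, and then count via Chebotarev (equivalently Dirichlet) applied to the degree-$q$ subfield of $\mathbb{Q}(\zeta_{q^2})$.

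The one point of divergence is how you handle the prime $q$. The paper observes that the shifted polynomial $f_p(t+p)=t^q+\sum_{j=1}^{q-1}\binom{q}{j}p^{q-j}t^j+(p^q-p)$ is $q$-Eisenstein precisely when $q^2\nmid p^q-p$, and then invokes its Lemma that Eisenstein at $q$ forces $q\nmid[\mathcal{O}_K:\mathbb{Z}[\alpha]]$. You instead apply Dedekind's criterion directly to $\overline{f_p}=(t-\overline{p})^q$ and evaluate the auxiliary polynomial at $p$, obtaining the same quantity $(p^q-p)/q$. The two computations are really the same computation read through different lenses; your route has the minor bonus of giving an ``if and only if'' for $q\mid[\mathcal{O}_K:\mathbb{Z}[\alpha]]$, while the paper's Eisenstein-shift argument is more self-contained (it proves the needed Eisenstein-index lemma from scratch rather than quoting Dedekind's criterion).
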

The idea is to find a congruence condition on $p$ such that $f_p(t)=t^q-p$ is monogenic. This condition on $p$ reads as $p^{q-1}\not\equiv 1\pmod{q^2}$. Then we use the Chebotarev density theorem to count these primes.

When $q=3$, using a description of an integral basis for a pure cubic field and an explicit computation, we notice that the index form of $\mathbb{Q}(\sqrt[3]{hk^2})$ is represented by $hx^3-ky^3$ when $h^2\not\equiv k^2\pmod{9}$ and $(hx^3-ky^3)/9$ for $h^2\equiv k^2\pmod{9}$ (see Theorem~\ref{theoremded} and Lemma~\ref{disc}). Thus $\mathbb{Q}(\sqrt[3]{hk^2})$ being monogenic is equivalent to integral solubility of 
\begin{equation}\label{solu}
hx^3+ky^3=
\begin{cases}
 1 & \text{if $h^2\not\equiv k^2\pmod{9}$};\\
 9 & \text{if $h^2\equiv k^2\pmod{9}$}.
 \end{cases}
\end{equation}
In particular when $p$ is a prime, $\mathbb{Q}(\sqrt[3]{p})$ is monogenic for $p\equiv \pm 2,\pm 5\pmod{9}$. For $p\equiv \pm1\pmod{9}$ we obtain the following equation
\begin{equation}
px^3+y^3=9.
\end{equation}
By counting those primes $p\equiv \pm 1\pmod{9}$ where $9$ is not a cube in $\mathbb{F}_p$, we will find a lower bound for the density of non-monogenic cubic fields $\mathbb{Q}(\sqrt[3]{p})$. Notice that, when $p\equiv -1\pmod{9}$, $9$ is a cube in $\mathbb{F}_p$. Therefore we restrict ourselves to considering primes of the form $p\equiv 1 \pmod{9}$ and to computing the density of these primes where $9$ is not a cube modulo $p$. Let $K=\mathbb{Q}(\zeta_9,\sqrt[3]{9})$, where $\zeta_9$ is a primitive $9$th root of unity. By noticing that a prime $p$ splits completely in $K$ if and only if $p\equiv 1\pmod{9}$ and $9^{\frac{p-1}{3}}\equiv 1\pmod{p}$, and applying Chebotarev density theorem we will prove:
\begin{theorem}\label{nonmono}
The density of primes $p\equiv 1\pmod{9}$ such that the Thue's equation $px^3+y^3=9$,
 does not have any integral solution is at least $1/9$. This set of primes produces non-monogenic cubic fields $\mathbb{Q}(\sqrt[3]{p})$.
 \end{theorem}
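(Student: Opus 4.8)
The plan is to combine the index-form criterion obtained in the $q=3$ discussion with a congruence obstruction to the resulting Thue equation, and then to count the obstructing primes via the Chebotarev density theorem.

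First I would record the reduction. Taking $h=p$ and $k=1$ in \eqref{solu}, and noting that $p^2\equiv 1\pmod 9$ holds exactly when $p\equiv\pm1\pmod 9$, the field $\mathbb{Q}(\sqrt[3]{p})$ is monogenic, for such $p$, if and only if $px^3+y^3=9$ is soluble in $\mathbb{Z}^2$. Hence it is enough to exhibit a set of primes $p\equiv 1\pmod 9$, of density at least $1/9$, for which this equation has no solution.

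Next comes the congruence obstruction. If $(x,y)\in\mathbb{Z}^2$ satisfies $px^3+y^3=9$, reducing modulo $p$ (legitimate since $p\neq 3$) gives $y^3\equiv 9\pmod p$, and $p\nmid y$ because $p\nmid 9$; thus $9$ is a nonzero cube in $\mathbb{F}_p$. Contrapositively, if $9$ is not a cube in $\mathbb{F}_p$ then $px^3+y^3=9$ is insoluble and $\mathbb{Q}(\sqrt[3]{p})$ is non-monogenic. This also explains the restriction to $p\equiv1\pmod 9$: when $p\equiv-1\pmod 9$ one has $3\nmid p-1$, so cubing is a bijection of $\mathbb{F}_p^{\times}$ and $9$ is automatically a cube, giving no information.

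It then remains to count primes $p\equiv1\pmod9$ for which $9$ is not a cube modulo $p$. The primes with $p\equiv1\pmod9$ are exactly those splitting completely in $\mathbb{Q}(\zeta_9)$, a set of density $1/\varphi(9)=1/6$. Among primes with $p\equiv1\pmod3$, $9$ is a cube in $\mathbb{F}_p$ precisely when $9^{(p-1)/3}\equiv1\pmod p$; so, by the splitting criterion recalled just before the statement, the primes $p\equiv1\pmod9$ with $9$ a cube mod $p$ are exactly those splitting completely in $K=\mathbb{Q}(\zeta_9,\sqrt[3]{9})$, of density $1/[K:\mathbb{Q}]$. To finish I would compute $[K:\mathbb{Q}]=18$: since $\zeta_3\in\mathbb{Q}(\zeta_9)$ the extension $K/\mathbb{Q}$ is Galois, and $t^3-9$ remains irreducible over $\mathbb{Q}(\zeta_9)$ — otherwise $\mathbb{Q}(\sqrt[3]{9})$, a non-normal cubic field, would embed into the abelian field $\mathbb{Q}(\zeta_9)$, which is impossible — so $[K:\mathbb{Q}]=6\cdot3=18$. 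Therefore the density of primes $p\equiv1\pmod9$ with $9$ not a cube mod $p$ is $1/6-1/18=1/9$, and every such prime gives a non-monogenic $\mathbb{Q}(\sqrt[3]{p})$, proving the theorem. The only step that is not purely formal is the degree computation for $K$ (equivalently, that $\sqrt[3]{9}\notin\mathbb{Q}(\zeta_9)$); the rest is the elementary mod-$p$ reduction together with a direct application of Chebotarev, so I expect no serious obstacle.
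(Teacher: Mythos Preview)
Your argument is correct and follows essentially the same route as the paper: the index-form reduction to $px^3+y^3=9$, the mod-$p$ obstruction that $9$ must be a cube in $\mathbb{F}_p$, and the Chebotarev count $1/6-1/18=1/9$ via complete splitting in $K=\mathbb{Q}(\zeta_9,\sqrt[3]{9})$. The only cosmetic difference is that the paper obtains $[K:\mathbb{Q}]=18$ by exhibiting an isomorphism $\Gal(K/\mathbb{Q})\cong(\mathbb{Z}/3\mathbb{Z})\rtimes(\mathbb{Z}/9\mathbb{Z})^{*}$, whereas you argue directly that $\sqrt[3]{9}\notin\mathbb{Q}(\zeta_9)$.
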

We can also describe these primes by the following result:
\begin{theorem}\label{cubic} Let $p\equiv 1\pmod{9}$ be a prime number which can be represented by $7x^2+3xy+9y^2$. Then 
$\mathbb{Q}(\sqrt[3]{p})$ is non-monogenic. 
\end{theorem} 

\section{Monogenic Fields and Diophantine equations}
Generally speaking, one needs to solve a  Diophantine equation in order to show that a number field is monogenic.
It is useful to recall the following well-known statement (see~\cite{Neuk}, Proposition I.2.12).
\begin{lemma}\label{disc}
Let $K$ be a number field of degree $n$ and $\alpha_1,\dots, \alpha_n\in \mathcal{O}_K$ be linearly independent over $\mathbb{Q}$. Set $\mathcal{M}=\mathbb{Z}\alpha_1+\cdots+\mathbb{Z}\alpha_n$. Then
$Disc(\mathcal{M})=(\mathcal{O}_K:\mathcal{M})^2Disc(K)$. In particular,
$Disc(\alpha)=Ind(\alpha)^2Disc(K)$, if $\alpha\in \mathcal{O}_K$ and $K=\mathbb{Q}(\alpha)$, where $Ind(\alpha)=(\mathcal{O}_K:\mathbb{Z}[\alpha])$.
\end{lemma}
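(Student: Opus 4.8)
The plan is to reduce the statement to a comparison of two full-rank sublattices of $K$, regarded as free $\mathbb{Z}$-modules of rank $n=[K:\mathbb{Q}]$, and to track how the discriminant transforms under a $\mathbb{Z}$-linear change of basis. First I would fix an integral basis $\omega_1,\dots,\omega_n$ of $\mathcal{O}_K$. Since each $\alpha_i$ lies in $\mathcal{O}_K$, one can write $\alpha_i=\sum_{j=1}^n a_{ij}\omega_j$ with $a_{ij}\in\mathbb{Z}$; set $A=(a_{ij})\in M_n(\mathbb{Z})$. The hypothesis that $\alpha_1,\dots,\alpha_n$ are linearly independent over $\mathbb{Q}$ guarantees that $\mathcal{M}$ is free of rank $n$ and that $\det A\neq 0$.

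Next I would invoke two classical facts. The first is the index formula $(\mathcal{O}_K:\mathcal{M})=|\det A|$: writing $A=UDV$ with $U,V\in\GL_n(\mathbb{Z})$ and $D=\mathrm{diag}(d_1,\dots,d_n)$ its Smith normal form, one has $\mathcal{O}_K/\mathcal{M}\cong\bigoplus_i\mathbb{Z}/d_i\mathbb{Z}$, whose order is $\prod_i|d_i|=|\det D|=|\det A|$. The second is the transformation law for the discriminant of an $n$-tuple: starting from $Disc(\beta_1,\dots,\beta_n)=\det\big(\mathrm{Tr}_{K/\mathbb{Q}}(\beta_i\beta_j)\big)$, the identity $\mathrm{Tr}_{K/\mathbb{Q}}(\alpha_i\alpha_j)=\sum_{k,l}a_{ik}a_{jl}\,\mathrm{Tr}_{K/\mathbb{Q}}(\omega_k\omega_l)$ shows that the trace-pairing matrix of the $\alpha_i$ equals $A\,T\,A^{\mathsf T}$, where $T$ is the trace-pairing matrix of the $\omega_j$; taking determinants yields $Disc(\alpha_1,\dots,\alpha_n)=(\det A)^2\,Disc(\omega_1,\dots,\omega_n)$. (Equivalently, one may use $Disc=\det(\sigma_i(\beta_j))^2$ over the $n$ embeddings $\sigma_i\colon K\hookrightarrow\overline{\mathbb{Q}}$, together with the factorization $(\sigma_i(\alpha_j))=(\sigma_i(\omega_k))\,A^{\mathsf T}$.) Since $Disc(\mathcal{M})=Disc(\alpha_1,\dots,\alpha_n)$ and $Disc(\omega_1,\dots,\omega_n)=Disc(K)$ by definition, combining the two facts gives $Disc(\mathcal{M})=(\mathcal{O}_K:\mathcal{M})^2\,Disc(K)$.

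For the ``in particular'' assertion I would specialize to $\mathcal{M}=\mathbb{Z}[\alpha]$ with $\alpha_i=\alpha^{\,i-1}$: when $K=\mathbb{Q}(\alpha)$ these powers form a $\mathbb{Q}$-basis of $K$ and a $\mathbb{Z}$-basis of $\mathbb{Z}[\alpha]$, so $Disc(\mathcal{M})=Disc(1,\alpha,\dots,\alpha^{n-1})$ is by definition $Disc(\alpha)$, while $(\mathcal{O}_K:\mathbb{Z}[\alpha])=Ind(\alpha)$, and the general formula becomes $Disc(\alpha)=Ind(\alpha)^2\,Disc(K)$.

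As this is a standard fact, there is no real obstacle; the only points needing a little care are verifying that every module in sight is genuinely free of full rank $n$ (so that indices and determinants are well defined) and citing the index formula $(\mathcal{O}_K:\mathcal{M})=|\det A|$ correctly. Everything else is routine determinant bookkeeping.
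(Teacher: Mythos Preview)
Your argument is correct and is the standard proof of this classical fact: express the $\alpha_i$ in terms of an integral basis via an integer matrix $A$, identify $(\mathcal{O}_K:\mathcal{M})=|\det A|$ via Smith normal form, and use the transformation law $Disc(\alpha_i)=(\det A)^2\,Disc(\omega_j)$ for the trace-pairing matrix. The paper itself gives no proof at all---it simply introduces the lemma as a ``well-known statement'' to be recalled---so there is nothing to compare your approach against; you have supplied precisely the routine justification that the paper omits.
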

Choosing an integral basis for $K$ and writing $\alpha$ with respect to this integral basis, one can see that $Ind(\alpha)$ is a homogeneous form. In this section, we will focus on cubic fields. 

Let $K=\mathbb{Q}(\sqrt[3]{m})$, with $m\in\mathbb{Z}$ being a cube-free number, be a cubic field. We can assume $m=hk^2$ with $h,k>0$ and $hk$ is square-free. The following theorem is due to Dedekind (see~\cite{Williams} Theorem 7.3.2).
\begin{theorem}[Dedekind]\label{theoremded} Let $m$ and $K$ be as above, and let $\theta= m^{1/3}$. Then
\begin{enumerate}[\upshape (i)]
\item For $m^2\not\equiv 1\pmod{9}$, we have $Disc(K)=-27(hk)^2$, and the numbers $\left\{1,\theta,\theta^2/k\right\}$,
form an integral basis.
\item For $m\equiv \pm 1\pmod{9}$, we have $Disc(K)=-3(hk)^2$, and the numbers
$\left\{1,\theta,(k^2\pm k^2\theta+\theta^2)/3k\right\}$,
form an integral basis of $\mathcal{O}_K$.
\end{enumerate}
\end{theorem}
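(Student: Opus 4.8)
The plan is to build, in each of the two cases, an explicit order $\mathcal{M}\subseteq\mathcal{O}_K$ out of the listed elements, compute its discriminant by a change of basis from $\{1,\theta,\theta^2\}$, and then prove by a prime-by-prime ($p$-adic) maximality analysis that $\mathcal{M}$ is in fact all of $\mathcal{O}_K$; the asserted value of $Disc(K)$ then falls out of Lemma~\ref{disc}. First I would record the data attached to $\theta$: since $\theta$ is a root of $f(t)=t^3-m$ one has $Disc(\theta)=\mathrm{disc}(t^3-m)=-27m^2=-27h^2k^4$. Setting $\omega:=\theta^2/k$, the relation $\omega^3=m^2/k^3=h^2k$ shows $\omega\in\mathcal{O}_K$, and the auxiliary identities $\omega^2=h\theta$ and $\theta\omega=hk$ will be convenient later. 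With $\mathcal{M}:=\mathbb{Z}\oplus\mathbb{Z}\theta\oplus\mathbb{Z}\omega$, the transition matrix from $\{1,\theta,\theta^2\}$ to $\{1,\theta,\omega\}$ has determinant $1/k$, so $Disc(\mathcal{M})=k^{-2}Disc(\theta)=-27(hk)^2$. By Lemma~\ref{disc} it then remains only to decide at which primes $\mathcal{M}$ fails to be maximal.

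Next I would run the maximality analysis prime by prime. For $p\mid h$ with $p\neq 3$ we have $v_p(m)=1$ (because $hk$ is square-free), so $f=t^3-m$ is Eisenstein at $p$ and $\mathbb{Z}[\theta]$, hence $\mathcal{M}$, is $p$-maximal. For $p\mid k$ with $p\neq 3$ we have $v_p(h^2k)=1$, so $t^3-h^2k$ is Eisenstein at $p$ and $\mathbb{Z}[\omega]\subseteq\mathcal{M}$ is $p$-maximal, whence so is $\mathcal{M}$. Primes $p\nmid 3hk$ do not divide $Disc(\mathcal{M})$ and are automatic. This isolates $p=3$ as the crux. If $3\mid m$, then exactly one of $3\mid h$, $3\mid k$ holds (never both, as $hk$ is square-free), and one of the two Eisenstein arguments already applies at $3$; moreover $3\mid m$ forces $m^2\not\equiv 1\pmod 9$, so this lies in case (i) and yields $\mathcal{M}=\mathcal{O}_K$ with $Disc(K)=-27(hk)^2$. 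If $3\nmid m$, I would apply Dedekind's criterion to $f$ at $p=3$: since $t^3-m\equiv(t-m)^3\pmod 3$, a short computation shows that $\mathbb{Z}[\theta]$ (equivalently $\mathcal{M}$, as $3\nmid k$ here) is $3$-maximal precisely when $m^3\not\equiv m\pmod 9$, i.e. when $m\not\equiv\pm1\pmod 9$. This simultaneously completes case (i), giving $\mathcal{M}=\mathcal{O}_K$ and $Disc(K)=-27(hk)^2$ together with the integral basis $\{1,\theta,\theta^2/k\}$, and it shows that in case (ii) the index $[\mathcal{O}_K:\mathcal{M}]$ is divisible by $3$.

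For case (ii), where $m\equiv\pm1\pmod 9$, I would enlarge $\mathcal{M}$ by the proposed element $\beta:=(k^2\pm k^2\theta+\theta^2)/(3k)=\tfrac13(k\pm k\theta+\omega)$. The decisive step is to verify $\beta\in\mathcal{O}_K$: its characteristic polynomial is $t^3-\mathrm{Tr}(\beta)t^2+e_2t-N(\beta)$ with $\mathrm{Tr}(\beta)=k$, and using the pure-cubic norm form $N(a+b\theta+c\theta^2)=a^3+mb^3+m^2c^3-3mabc$ one checks that $N(\beta)$ and $e_2$ are integers \emph{exactly} when $m\equiv\pm1\pmod 9$, with the sign matching the one appearing in $\beta$. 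Granting this, $\mathcal{N}:=\mathbb{Z}\oplus\mathbb{Z}\theta\oplus\mathbb{Z}\beta\supseteq\mathcal{M}$ with $[\mathcal{N}:\mathcal{M}]=3$, so $Disc(\mathcal{N})=3^{-2}Disc(\mathcal{M})=-3(hk)^2$. Since $\mathcal{N}\supseteq\mathcal{M}$ is $p$-maximal for every $p\neq 3$, and since $v_3(Disc(\mathcal{N}))=1$ is odd so that $3$ cannot divide $[\mathcal{O}_K:\mathcal{N}]$ (whose square divides $Disc(\mathcal{N})$ by Lemma~\ref{disc}), I conclude $[\mathcal{O}_K:\mathcal{N}]=1$. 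Hence $\{1,\theta,\beta\}$ is an integral basis and $Disc(K)=-3(hk)^2$.

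The main obstacle throughout is the behaviour at the prime $3$: first, pinning down $3$-maximality of $\mathbb{Z}[\theta]$ through Dedekind's criterion and relating it cleanly to $m\bmod 9$; and second, in case (ii), the explicit integrality verification for $\beta$, which is exactly what forces the congruence $m\equiv\pm1\pmod 9$ and selects the correct sign. Everything away from $3$ reduces to Eisenstein/square-free bookkeeping, so the content of the theorem is concentrated entirely in these two $3$-adic computations.
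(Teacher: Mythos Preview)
The paper does not supply a proof of this theorem: it is stated as a classical result of Dedekind with a reference to \cite{ded}, and is then used as a black box. So there is no ``paper's own proof'' to compare against.

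That said, your sketch is a correct and standard way to establish the result. The Eisenstein arguments at primes dividing $h$ or $k$ (via $\theta$ and $\omega=\theta^2/k$ respectively) handle $p$-maximality away from $3$; the Dedekind-criterion computation at $p=3$ is right and yields exactly the dichotomy $m\equiv\pm1\pmod 9$; and in case~(ii) the discriminant parity argument ($v_3(-3(hk)^2)=1$, since $3\nmid hk$ there) cleanly forces $[\mathcal{O}_K:\mathcal{N}]=1$ once $\beta$ is shown to be integral. Two small remarks: your $\mathcal{N}$ need not be a ring for Lemma~\ref{disc} to apply, and your argument only uses it as a full-rank sublattice of $\mathcal{O}_K$, so this is harmless; and the integrality of $\beta$ (the ``decisive step'') is indeed a straightforward norm/trace computation, but it is the one place where the sign convention in $\beta$ is tied to the sign of $m\bmod 9$, so it is worth writing out in full in a final version.
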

Notice that this Theorem shows $\mathbb{Q}(\sqrt[3]{p})$ is monogenic for primes $p\equiv \pm 2,\pm 5\pmod{9}$, which verifies Theorem~\ref{main} for $q=3$. For $p\equiv \pm1 \pmod{9}$, by invoking Theorem~\ref{theoremded} we obtain the following integral basis for $K=\mathbb{Q}(\sqrt[3]{p})$
\begin{equation*}
\left\{1,\theta,\frac{1\pm\theta+\theta^2}{3}\right\},
\end{equation*}
where $\theta=\sqrt[3]{p}$. Let
$
\alpha=a+b\theta+c\frac{1\pm\theta+\theta^2}{3}\in\mathcal{O}_K
$,
and assume $\alpha',\alpha''$ are the other conjugates of $\alpha$. It is easy to see that:
\begin{equation}
\begin{split}
\alpha-\alpha'&=(\theta-\theta')\left(\left(b \pm \frac{c}{3}\right) - \frac{c\theta''}{3} \right)\\
\alpha-\alpha''&=(\theta-\theta'')\left(\left(b \pm \frac{c}{3}\right) - \frac{c\theta'}{3} \right)\\
\alpha'-\alpha''&=(\theta'-\theta'')\left(\left(b \pm \frac{c}{3}\right) - \frac{c\theta}{3} \right)
\end{split}
\end{equation}
where $\theta'$ and $\theta''$ are the other conjugates of $\theta$. Therefore we have the following equalities: 
\begin{align*}
Disc(\alpha)&=Disc(\theta)\left(\left(b\pm\frac{c}{3}\right)^3-p\left(\frac{c}{3}\right)^3\right)^2
=-3^3p^2\left(\left(b\pm\frac{c}{3}\right)^3-p\left(\frac{c}{3}\right)^3\right)^2\\
&=-3p^2\left(3b^3\pm 3b^2c+bc^2+\frac{\pm1-p}{9}c^3\right)^2.
\end{align*}
Thus by applying Lemma~\ref{disc} and Theorem~\ref{theoremded} we have the following identity:
$$
Ind(\alpha)=\left|3b^3\pm 3b^2c+bc^2+\frac{\pm1-p}{9}c^3\right|.
$$
So to determine if $\mathbb{Q}(\sqrt[3]{p})$ is monogenic, for primes of the form $p\equiv\pm 1\pmod{9}$, we need to find the integral solutions of
\begin{equation}\label{p}
\left|3b^3\pm 3b^2c+bc^2+\frac{\pm1-p}{9}c^3\right|=1.
\end{equation}
Multiplying Equation~(\ref{p}) by 9 we obtain an equivalent equation
$|(3b\pm c)^3-pc^3|=9$,
which, for primes $p\equiv \pm 1\pmod{9}$, is equivalent to $px^3+y^3=9$.
Therefore, we obtain the following result
\begin{lemma}\label{2.3}
Let $p\equiv\pm 1\pmod{9}$ be a prime number. Then, $\mathbb{Q}(\sqrt[3]{p})$ is monogenic if and only if 
\begin{equation}\label{local}
px^3+y^3=9,
\end{equation}
has an integral solution.
\end{lemma}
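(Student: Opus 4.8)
The plan is to read monogenicity straight off the index form $Ind(\alpha)$ computed just above, and then to recognise the resulting cubic equation as the Thue equation $px^{3}+y^{3}=9$ through a linear change of variables. By definition $K=\mathbb{Q}(\sqrt[3]{p})$ is monogenic exactly when $\mathcal{O}_{K}=\mathbb{Z}[\alpha]$ for some $\alpha\in\mathcal{O}_{K}$, and by Lemma~\ref{disc} this is equivalent to $Ind(\alpha)=1$ for some $\alpha\in\mathcal{O}_{K}$ (note $Ind(\alpha)=1$ forces $Disc(\alpha)=Disc(K)\neq 0$, so such an $\alpha$ is automatically a primitive element). Since $p\equiv\pm1\pmod 9$, Theorem~\ref{theoremded}(ii) supplies the integral basis $\{1,\theta,\frac{1\pm\theta+\theta^{2}}{3}\}$, so a general element of $\mathcal{O}_{K}$ is $\alpha=a+b\theta+c\,\frac{1\pm\theta+\theta^{2}}{3}$ with $a,b,c\in\mathbb{Z}$; because $\mathbb{Z}[\alpha]=\mathbb{Z}[\alpha-a]$ the index depends only on $(b,c)$, and the computation preceding the statement gives $Ind(\alpha)=|3b^{3}\pm 3b^{2}c+bc^{2}+\frac{\pm1-p}{9}c^{3}|$. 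Thus $K$ is monogenic if and only if this expression equals $1$ for some integers $b,c$; multiplying through by $9$ and expanding $(3b\pm c)^{3}$, this is the same as the solvability of $|(3b\pm c)^{3}-pc^{3}|=9$.

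It remains to match this with $px^{3}+y^{3}=9$. I would substitute $(u,v)=(3b\pm c,\,c)$: as $(b,c)$ runs over $\mathbb{Z}^{2}$, the pair $(u,v)$ runs over all integer pairs with $u\equiv v\pmod 3$ (the $+$ basis, used when $p\equiv 1\pmod 9$) or with $u\equiv -v\pmod 3$ (the $-$ basis, used when $p\equiv -1\pmod 9$). The point is that this congruence is \emph{automatically} satisfied by any solution of $|u^{3}-pv^{3}|=9$: reducing modulo $3$ and using $t^{3}\equiv t\pmod 3$ together with $p\equiv\pm1\pmod 3$ forces $u\equiv\pm v\pmod 3$ with exactly the sign dictated by the basis. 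Hence $|(3b\pm c)^{3}-pc^{3}|=9$ is solvable in integers precisely when $|u^{3}-pv^{3}|=9$ is. Finally, replacing $(u,v)$ by $(-u,-v)$ if necessary we may assume $u^{3}-pv^{3}=9$, whereupon $(x,y)=(-v,u)$ solves $px^{3}+y^{3}=9$; conversely $(u,v)=(y,-x)$ turns a solution of $px^{3}+y^{3}=9$ back into one of $u^{3}-pv^{3}=9$. Chaining these equivalences yields the lemma.

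The only step I expect to require care is the verification that the congruence $u\equiv\pm v\pmod 3$ imposed by the substitution is forced by the equation $|u^{3}-pv^{3}|=9$; this is exactly where the hypothesis $p\equiv\pm1\pmod 9$ (in fact only $p\equiv\pm1\pmod 3$) is used, guaranteeing that the passage to $px^{3}+y^{3}=9$ neither loses nor gains solutions. Everything else is the bookkeeping of the $\pm$ signs and the elementary algebra already displayed above.
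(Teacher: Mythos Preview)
Your argument is correct and follows essentially the same route as the paper: compute the index form from Dedekind's integral basis, set it equal to $1$, multiply by $9$ to obtain $|(3b\pm c)^{3}-pc^{3}|=9$, and then identify this with the Thue equation $px^{3}+y^{3}=9$. Your only addition is that you actually justify the last equivalence---checking that the congruence $u\equiv\pm v\pmod 3$ forced by the substitution $u=3b\pm c$, $v=c$ is automatic from $u^{3}-pv^{3}\equiv 0\pmod 3$ and $p\equiv\pm1\pmod 3$, and handling the sign in the absolute value---whereas the paper simply asserts the equivalence without comment.
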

\begin{remark}
Here, for simplicity, we found the index form of $\mathbb{Q}(\sqrt[3]{p})$. But the same computation gives us Equation~\eqref{solu}.
\end{remark}
Hence to construct a non-monogenic $\mathbb{Q}(\sqrt[3]{p})$, it would be enough to find a prime  $p\equiv \pm 1\pmod{9}$, such that Equation~\eqref{local} does not have any integral solution. One can find some of those primes by study the equation locally, for instance those primes $p$, such that $9$ is not a cube modulo $p$. Notice that $9$ is a cube if and only if $3$ is a cube in $\mathbb{F}_p$. Therefore we will briefly study the number of solutions of $h(t):=t^3-3$ in a finite field $\mathbb{F}_p$, denoted by $N_p(h(t))$, for all primes $p\geq 5$.
\begin{lemma}\label{sol} Let $N_p(h(t))$ be as above and $p\geq 5$ a prime number. Then
\begin{equation*}
N_p(h(t))=
\begin{cases}
1 & \text{if $p\equiv 2\pmod{3}$}\\
0 & \text{if $p=7x^2+3xy+9y^2$}\\
3 & \text{if $p=x^2+xy+61y^2$}
\end{cases}.
\end{equation*}
\end{lemma}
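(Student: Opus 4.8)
The plan is to split according to $p\bmod 3$. If $p\equiv 2\pmod 3$ the result is immediate: then $\gcd(3,p-1)=1$, so the cubing map is a bijection of $\mathbb{F}_p$, whence $3$ has a unique cube root modulo $p$ and $N_p(h)=1$. All the content is therefore in the case $p\equiv 1\pmod 3$, which I would handle by relating the solvability of $t^{3}\equiv 3\pmod p$ to the way $p$ decomposes in the splitting field of $h$.

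Assume $p\equiv 1\pmod 3$. The cubes now form a subgroup of index $3$ in $\mathbb{F}_p^{\times}$, so $h$ has exactly $3$ roots modulo $p$ when $3$ is a cube there and none otherwise; and since $p\ge 5$ does not divide $\mathrm{disc}(h)=-3^{5}$, the reduction of $h$ is separable, so the number of roots is determined by the Frobenius conjugacy class at $p$ in $\Gal(M/\mathbb{Q})\cong S_{3}$, where $M=\mathbb{Q}(\zeta_{3},\sqrt[3]{3})$ is the splitting field of $h$ ($\zeta_{3}$ a primitive cube root of unity). Explicitly, $N_p(h)=3$ exactly when $p$ splits completely in $M$, whereas $N_p(h)=0$ exactly when the Frobenius is a $3$-cycle, i.e. when $p$ splits in $K:=\mathbb{Q}(\sqrt{-3})$ (for $p\ge 5$ this is just $p\equiv 1\pmod 3$) but does not split completely in $M$.

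I would then identify $M$ with the ring class field of the order of conductor $9$ in $K$, whose discriminant is $-243$. Since $M/K$ is cyclic of order $3$ and $M/\mathbb{Q}$ is generalized dihedral, complex multiplication theory places $M$ inside a ring class field of $K$; as $M/\mathbb{Q}$ is unramified outside $3$, the conductor is a power of $3$, and because the orders of conductor $1$ and $3$ have class number $1$ (their ring class field being $K$ itself) while the order of conductor $9$ has class number $h(-243)=3=[M:K]$, the field $M$ must be the ring class field of conductor $9$. The standard dictionary between ring class fields and binary quadratic forms (for instance D.~Cox, \emph{Primes of the form $x^2+ny^2$}) then yields, for $p\nmid 27$: the prime $p$ is represented by the principal form $x^{2}+xy+61y^{2}$ of discriminant $-243$ if and only if $p$ splits completely in $M$, i.e. if and only if $N_p(h)=3$. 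The class group of discriminant $-243$ has order $3$, its two non-principal classes being those of $7x^{2}\pm 3xy+9y^{2}$, which represent the same set of primes; hence the remaining primes $p\equiv 1\pmod 3$ — precisely those represented by $7x^{2}+3xy+9y^{2}$ — are exactly the ones with $N_p(h)=0$. Together with the case $p\equiv 2\pmod 3$ this exhausts all $p\ge 5$ and proves the lemma.

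The main obstacle is the identification of $M$ as the ring class field of conductor $9$, equivalently the classical assertion that for $p\equiv 1\pmod 3$ the congruence $t^{3}\equiv 3\pmod p$ is solvable iff $p=x^{2}+xy+61y^{2}$; pinning down the conductor requires a local ramification computation at the prime of $K$ above $3$. An approach bypassing class field theory is to use cubic reciprocity in $\mathbb{Z}[\zeta_{3}]$: from the factorization $3=-\zeta_{3}^{2}(1-\zeta_{3})^{2}$, the reciprocity law, and the supplementary laws for the units and for $1-\zeta_{3}$, one computes the cubic residue character of $3$ modulo a prime above $p$ in terms of the essentially unique representation $4p=L^{2}+27M^{2}$, finding that it is trivial exactly when $3\mid M$; and $3\mid M$ is equivalent to $4p=(2x+y)^{2}+243y^{2}$, i.e. to $p$ being represented by $x^{2}+xy+61y^{2}$. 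Either way a single reciprocity/CM input does the real work, the rest being bookkeeping with forms of discriminant $-243$.
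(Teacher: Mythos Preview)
Your argument is correct and tracks the paper's closely: both split on $p\bmod 3$, reduce the $p\equiv 1\pmod 3$ case to whether $3$ is a cube in $\mathbb{F}_p$, and then invoke the classical link (via Cox) between cubic residuacity of $3$ and the forms of discriminant $-243$.

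The one structural difference is in how the dichotomy between $x^{2}+xy+61y^{2}$ and $7x^{2}+3xy+9y^{2}$ is established. The paper quotes the Euler--Gauss criterion for $x^{2}+xy+61y^{2}$ as a black box and then proves a separate elementary lemma (any two forms of the same discriminant representing the same prime are $\GL_2(\mathbb{Z})$-equivalent) to force the remaining $p\equiv 1\pmod 3$ into the non-principal class. You instead identify $M=\mathbb{Q}(\zeta_3,\sqrt[3]{3})$ directly as the ring class field of conductor $9$ and read off both cases at once from the form--class-field dictionary. Your route is more conceptual and avoids the auxiliary lemma, at the cost of the conductor computation you flag; the paper's route is more hands-on but self-contained once the Gauss result is granted. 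Your alternative via cubic reciprocity and $4p=L^{2}+27M^{2}$ is also sound and is essentially what underlies the cited Gauss theorem.
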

Let $E:=\mathbb{Q}[t]/(h(t))$ be the cubic field defined by $h(t)$ with the splitting field $L$, which contains the quadratic field $K:=\mathbb{Q}(\sqrt{-3})$. Let $\eta_1=\sqrt[3]{3},\eta_2,\eta_3$ be the conjugates of $\sqrt[3]{3}$ and define
$$\Delta:=\prod_{1\leq i<j\leq 3}(\eta_j-\eta_i)\in K.$$

For a prime $p\geq 5$, consider the Frobenius automorphism associated to $p$, say $\sigma_p\in \Gal(L/\mathbb{Q})$, which is unique up to conjugation. Regarding $\sigma_p$ as a permutation in $S_3$, we observe that 
$$\sigma_p(\Delta)=sgn(\sigma_p)\Delta.$$
Therefore $\sigma_p$ being even implies that $\sigma_p$ is a trivial element in $\Gal(K/\mathbb{Q})$, and thus $p$ splits completely in $K$. Also when $\sigma_p$ is an odd permutation we can conclude that $\sigma_p$ is not an identity element and therefore $p$ inerts in $K$. This shows    
$sgn(\sigma_p)=\left(\frac{p}{3}\right)$, where $\left(\frac{.}{3}\right)$ is denoted for the Legendre  symbol. Therefore $p\equiv 2\pmod{3}$ implies that $\sigma_p$ is a transposition and thus $h(t)=0$ has a unique solution in $\mathbb{F}_p$.  For $p\equiv 1\pmod{3}$, $\sigma_p$ is an even permutation, so $h(t)=0$ has either zero or three solutions in $\mathbb{F}_p$. Hence for $p\equiv 1\pmod{3}$, if $3$ is a cube in $\mathbb{F}_p$ we have $N_p(h(t))=3$, and if $3$ is not a cube we have $N_p(h(t))=0$.

One might find an alternative prove for this fact that $t^3-3$ has only one solution for $p\equiv 2\pmod{3}$, by looking at the homomorphism 
$$
\mathbb{F}_p^*\rightarrow\mathbb{F}_p^*\qquad a\mapsto a^3,
$$
and noticing that this an isomorphism. For $p\equiv 1\pmod{3}$, we have the 3rd root of unity in $\mathbb{F}_p$, and so one has either zero of three solutions. However the former method is more general, and can be applied for general polynomials (see~\cite{Serre}).

Using the cubic residue symbol, one can show that for primes $p\geq5$, $p$ can be presented by  $x^2 + xy + 61y^2$ if and only if $p\equiv 1\pmod{3}$ and $3$ is a cubic residue modulo $p$ (see~\cite{cox}, chapter 2, section 9).

Reduction theory of positive definite integral binary quadratic forms is easy to describe (for more details see~\cite{Flath,Venkov,cox}). For such a given form $f(x,y)=ax^2+bxy+cy^2$, by $\SL_2(\mathbb{Z})$ change of variable we can obtain a simpler form $f'(x,y)=a'x^2+b'xy+c'y^2$, where
$|b'|\leq a'\leq c'$  and in the case  $|b'|=a'$,  then  $b'=a'$;  and in the case  $a'=c'$,  then  $b'\geq 0$.
The discriminant of $x^2 + xy + 61y^2$ is $-243$ which has the class number $3$. More precisely, using the reduction algorithm explained briefly, there are, up to ${\SL}_2(\mathbb{Z})$ change of variables, three binary quadratic forms with discriminant $-243$. Namely $x^2 + xy + 61y^2, 7x^2\pm3xy+9y^2$, which are in the same genus. When $p\equiv 1\pmod{3}$, $p$ can be presented by only one of the form $x^2 + xy + 61y^2$ or $7x^2+3xy+9y^2$. Indeed we have the following:
\begin{lemma} Let $f_1,f_2$ be two integral binary quadratic forms of the same discriminant which represent the same prime. Then they are ${\GL}_2(\mathbb{Z})$-equivalent.
\end{lemma}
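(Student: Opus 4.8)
The plan is to normalise each $f_i$ so that it has leading coefficient $p$, and then to match up the remaining coefficients up to a $\GL_2(\mathbb{Z})$ twist. First I would dispose of the imprimitive case: if $\mathrm{cont}(f_i)>1$ then it divides the prime $p$, hence equals $p$, and a short discriminant computation shows that a \emph{primitive} form of discriminant $Disc(f_i)$ (which is then divisible by $p^2$) cannot represent $p$; so both $f_1,f_2$ must have content $p$, their primitive parts both represent $1$ and share the same discriminant, hence are both $\SL_2(\mathbb{Z})$-equivalent to the principal form of that discriminant, and $f_1\sim f_2$. So assume from now on that $f_1,f_2$ are primitive, write $D$ for their common discriminant, and (as is automatic in the application, where $p\equiv 1\pmod 3$) assume $p$ is odd.

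Next, since $p$ is prime, every representation of $p$ by a primitive form is proper: from $f_i(x_0,y_0)=p$ with $d=\gcd(x_0,y_0)$ one gets $d^2\mid p$, whence $d=1$. Extending a primitive vector to a $\mathbb{Z}$-basis of $\mathbb{Z}^2$ then shows that $f_i$ is $\SL_2(\mathbb{Z})$-equivalent to a form $p x^2+b_i xy+c_i y^2$, and equating discriminants gives $b_i^2-4pc_i=D$. Hence $b_1^2\equiv b_2^2\equiv D\pmod{4p}$. Reducing modulo $2$ shows $b_1\equiv b_2\pmod 2$, and reducing modulo the odd prime $p$ gives $b_1\equiv \varepsilon b_2\pmod p$ for some $\varepsilon\in\{+1,-1\}$; since moreover $\varepsilon b_2\equiv b_2\pmod 2$, the Chinese remainder theorem yields $b_1\equiv\varepsilon b_2\pmod{2p}$.

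Finally I would finish as follows. If $\varepsilon=-1$, replace $f_2$ by the form $p x^2-b_2 xy+c_2 y^2$ obtained from the $\GL_2(\mathbb{Z})$ substitution $(x,y)\mapsto(x,-y)$, so that we may assume $b_1\equiv b_2\pmod{2p}$. Then the $\SL_2(\mathbb{Z})$ substitution $(x,y)\mapsto(x+ty,y)$ with $t=(b_2-b_1)/(2p)\in\mathbb{Z}$ carries $p x^2+b_1 xy+c_1 y^2$ to a form $p x^2+b_2 xy+c_1'y^2$, and since the discriminant is preserved we must have $c_1'=c_2$. Composing these substitutions exhibits $f_1$ and $f_2$ as $\GL_2(\mathbb{Z})$-equivalent. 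The one step that genuinely needs care is the passage from $b_1^2\equiv b_2^2$ to $b_1\equiv\pm b_2$ modulo $2p$: for a general modulus a congruence of squares need not force equality up to sign, and it is precisely because the only even part of the modulus $2p$ is a single factor of $2$---where the choice of sign is immaterial---that the argument closes.
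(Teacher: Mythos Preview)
Your argument is correct (for odd $p$, which is all the application needs) and follows essentially the same route as the paper: both proofs bring each $f_i$ to the shape $px^2+b_ixy+c_iy^2$ and then argue that $b$ is determined modulo $2p$ up to sign, so that a reflection and a shear match the two forms. You are in fact more explicit than the paper at the crucial point---the paper simply asserts that normalising $b$ to $0\le b\le p$ ``determines $b'$ uniquely,'' whereas your CRT argument (square roots mod the odd prime $p$, parity mod $2$) actually justifies this. Your separate treatment of the imprimitive case is unnecessary: even when $\mathrm{cont}(f_i)=p$ the representation of $p$ is automatically proper (the primitive part represents $1$ properly), so the reduction to leading coefficient $p$ goes through uniformly and the rest of your argument applies verbatim.
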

 \begin{proof}[Proof:]
Let $f(x,y)=ax^2+bxy+cy^2$ be an integral binary quadratic form, representing a prime $p$. We can assume $f(x,y)=px^2+bxy+cy^2$. Consider $\gamma=\begin{pmatrix}
   1 & 0  \\
   m & 1  \end{pmatrix} \in {\SL}_2 (\mathbb{Z})$. Then
$$
\gamma.f(x,y):=f((x,y)\gamma)=px^2+(2pm+b)xy+(c+bm+pm^2)y^2=px^2+b'xy+c'y^2.
$$
We can choose $m$ so that $-p<2pm+b\leq p$, so we have shown that any integral binary quadratic form that represents a prime $p$ is ${\SL}_2(\mathbb{Z})$-equivalent to $px^2+b'xy+c'y^2$, where $-p<b'\leq p$. Under $\GL_2(\mathbb{Z})$-equivalence, we can assume $0\leq b'\leq p$. This determines $b'$ (and hence $c'$) uniquely and finishes the proof.
\end{proof}
Lemma~\ref{sol} shows $px^3+y^3=9$ does not have any integral solutions for those primes $p\equiv 1\pmod{9}$ which can be represented by the quadratic form $7x^2+3xy+9y^2$, and hence Lemma~\ref{2.3} gives a proof for Theorem~\ref{cubic}.
\section{counting a family of non-monogenic cubic fields}
Let $\zeta_9$ denote a primitive $9$th root of unity. We will show the following result
\begin{lemma}\label{Thue}
A prime $p$ splits completely in $K=\mathbb{Q}(\zeta_9,\sqrt[3]{9})$ if and only if $p\equiv 1\pmod{9}$ and $9^{\frac{p-1}{3}}\equiv 1\pmod{p}$.
\end{lemma}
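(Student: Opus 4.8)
The plan is to use that $K$ is a compositum of a cyclotomic field and a pure cubic field, and that complete splitting in a compositum is detected on the two factors. First I would rewrite $K$: since $\sqrt[3]{9}=(\sqrt[3]{3})^2$ and $\sqrt[3]{3}=\tfrac13(\sqrt[3]{9})^2$, we have $K=\mathbb{Q}(\zeta_9,\sqrt[3]{3})$. This is the splitting field over $\mathbb{Q}$ of $(t^9-1)(t^3-3)$, hence Galois over $\mathbb{Q}$; a degree count — $[\mathbb{Q}(\zeta_9):\mathbb{Q}]=6$ and $t^3-3$ remains irreducible over $\mathbb{Q}(\zeta_9)$, because the non-normal cubic field $\mathbb{Q}(\sqrt[3]{3})$ cannot lie inside the abelian field $\mathbb{Q}(\zeta_9)$ — gives $[K:\mathbb{Q}]=18$. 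I would also observe that the only rational prime ramifying in $K$ is $3$: the discriminant of $\mathbb{Q}(\zeta_9)$ is a power of $3$, and by Theorem~\ref{theoremded} (with $h=3$, $k=1$, so $m=3$ and $m^2\equiv 0\not\equiv 1\pmod 9$) the discriminant of $\mathbb{Q}(\sqrt[3]{3})$ is $-243$, again a power of $3$, so no prime other than $3$ ramifies in the compositum. Since $3\not\equiv 1\pmod 9$ (and likewise $2\not\equiv 1\pmod 9$), both sides of the claimed equivalence fail for $p\in\{2,3\}$, and we may henceforth take $p\ge 5$ unramified in $K$.

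For such $p$ I would invoke the standard fact that an unramified prime splits completely in a compositum $L_1L_2$ of Galois extensions of $\mathbb{Q}$ if and only if it splits completely in each of $L_1$ and $L_2$ (the decomposition group embeds into the product of the two decomposition groups). Applying this with $L_1=\mathbb{Q}(\zeta_9)$ and $L_2=\mathbb{Q}(\zeta_3,\sqrt[3]{3})$ — which is legitimate since $\zeta_3=\zeta_9^3\in\mathbb{Q}(\zeta_9)$ forces $K=L_1L_2$, and $L_2$ is exactly the Galois closure of $\mathbb{Q}(\sqrt[3]{3})$ — reduces the problem to two conditions. Splitting completely in $\mathbb{Q}(\zeta_9)$ is equivalent to $p\equiv 1\pmod 9$. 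Splitting completely in $L_2$ is equivalent, by Dedekind's factorization criterion applied to $t^3-3$ (valid as $p\ne 3$), to $t^3-3$ having three roots in $\mathbb{F}_p$; but once $p\equiv 1\pmod 9$ we have $p\equiv 1\pmod 3$, so $\mathbb{F}_p^\times$ contains the cube roots of unity and a single root of $t^3-3$ already produces three. Hence the second condition says precisely that $3$ is a cube in $\mathbb{F}_p^\times$, i.e., by Euler's cubic residue criterion, that $3^{(p-1)/3}\equiv 1\pmod p$.

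It remains to rephrase ``$3$ is a cube modulo $p$'' in terms of $9$. Since $9=3^2$ and the cubes form a subgroup of $\mathbb{F}_p^\times$, we have $3\in(\mathbb{F}_p^\times)^3\iff 9\in(\mathbb{F}_p^\times)^3$: if $9=c^3$ then $3=3^3/9=(3/c)^3$, and the converse is immediate. Equivalently, writing $u:=3^{(p-1)/3}\bmod p$, we have $u^3=1$, so $9^{(p-1)/3}=u^2=u^{-1}$, whence $9^{(p-1)/3}\equiv 1\pmod p$ if and only if $3^{(p-1)/3}\equiv 1\pmod p$. Combining this with the two conditions from the previous paragraph shows that $p$ splits completely in $K$ if and only if $p\equiv 1\pmod 9$ and $9^{(p-1)/3}\equiv 1\pmod p$, as claimed. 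The only step that really needs care is the compositum/ramification bookkeeping, especially routing the non-Galois factor $\mathbb{Q}(\sqrt[3]{3})$ through its Galois closure, which works precisely because $\mathbb{Q}(\zeta_9)$ already contains $\zeta_3$; the rest is Dedekind's splitting criterion together with Euler's criterion for cubic residues.
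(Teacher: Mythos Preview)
Your proof is correct, but it takes a genuinely different route from the paper's. The paper proceeds by first establishing an explicit isomorphism
\[
\psi:\Gal(K/\mathbb{Q})\longrightarrow \frac{\mathbb{Z}}{3\mathbb{Z}}\rtimes\left(\frac{\mathbb{Z}}{9\mathbb{Z}}\right)^{*},\qquad \sigma\longmapsto (a(\sigma),b(\sigma)),
\]
where $\sigma(\sqrt[3]{9})=\omega^{a(\sigma)}\sqrt[3]{9}$ and $\sigma(\zeta_9)=\zeta_9^{b(\sigma)}$, and then computes the two coordinates of the Frobenius $\sigma_p$ directly: $b(\sigma_p)\equiv p\pmod 9$ from $\sigma_p(\zeta_9)\equiv\zeta_9^p$, and $\omega^{a(\sigma_p)}\equiv 9^{(p-1)/3}\pmod{\mathfrak{p}}$ from $\sigma_p(9^{1/3})\equiv 9^{p/3}$. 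Triviality of $\sigma_p$ then reads off immediately as the two stated congruences.

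You instead factor $K$ as a compositum $\mathbb{Q}(\zeta_9)\cdot\mathbb{Q}(\zeta_3,\sqrt[3]{3})$ of two Galois extensions and invoke the standard ``splits in the compositum iff splits in each factor'' principle, reducing to the cyclotomic splitting law and Dedekind's criterion for $t^3-3$; the translation from $3$ being a cube to $9$ being a cube is then a short group-theoretic step. Your approach is more modular and avoids naming the semidirect product or tracking the Frobenius explicitly, at the cost of a small detour through the Galois closure of $\mathbb{Q}(\sqrt[3]{3})$ and the equivalence $3\in(\mathbb{F}_p^\times)^3\Leftrightarrow 9\in(\mathbb{F}_p^\times)^3$. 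The paper's approach, by contrast, yields the full Frobenius element $(a(\sigma_p),b(\sigma_p))$ for every unramified $p$, not just the completely split ones, which is slightly more information than you extract.
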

Let $\omega$ denote a primitive cube root of unity. To prove Lemma~\ref{Thue}, we need the following lemma, which is easy to prove.
\begin{lemma} Let $K=\mathbb{Q}(\zeta_9,\sqrt[3]{9})$. Then the following map is an isomorphism
$$
\psi:\Gal(K/\mathbb{Q})\rightarrow \frac{\mathbb{Z}}{3\mathbb{Z}}\rtimes\left(\frac{\mathbb{Z}}{9\mathbb{Z}}\right)^*\qquad
\sigma \mapsto (a(\sigma),b(\sigma)),
$$
where $\sigma(\sqrt[3]{9})=\omega^{a(\sigma)}\sqrt[3]{9}$ and $\sigma(\zeta_9)=\zeta_9^{b(\sigma)}$.
\end{lemma}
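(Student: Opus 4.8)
The plan is to identify $\Gal(K/\mathbb{Q})$ abstractly and then verify that $\psi$ is a well-defined injective homomorphism; a cardinality count then upgrades it to an isomorphism. First I would record that $K/\mathbb{Q}$ is Galois: since $\omega=\zeta_9^3$ already lies in $\mathbb{Q}(\zeta_9)\subseteq K$, the field $K$ contains all three roots $\sqrt[3]{9},\omega\sqrt[3]{9},\omega^2\sqrt[3]{9}$ of $t^3-9$ together with every $9$th root of unity, so $K$ is the splitting field over $\mathbb{Q}$ of $(t^3-9)(t^9-1)$. Next I would compute $[K:\mathbb{Q}]=18$: we have $[\mathbb{Q}(\zeta_9):\mathbb{Q}]=\varphi(9)=6$, and $[K:\mathbb{Q}(\zeta_9)]$ divides $\deg(t^3-9)=3$; it cannot be $1$, since $\sqrt[3]{9}\in\mathbb{Q}(\zeta_9)$ would force the non-normal cubic field $\mathbb{Q}(\sqrt[3]{9})$ to be a subfield of the abelian extension $\mathbb{Q}(\zeta_9)/\mathbb{Q}$, hence Galois over $\mathbb{Q}$ --- a contradiction. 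So $t^3-9$ remains irreducible over $\mathbb{Q}(\zeta_9)$ and $[K:\mathbb{Q}]=18$.

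Second, I would check that $\psi$ is well defined and injective. Any $\sigma\in\Gal(K/\mathbb{Q})$ permutes the roots of $t^3-9$, so $\sigma(\sqrt[3]{9})=\omega^{a(\sigma)}\sqrt[3]{9}$ for a unique $a(\sigma)\in\mathbb{Z}/3\mathbb{Z}$, and $\sigma$ carries $\zeta_9$ to a primitive $9$th root of unity, so $\sigma(\zeta_9)=\zeta_9^{b(\sigma)}$ for a unique $b(\sigma)\in(\mathbb{Z}/9\mathbb{Z})^*$. Because $\zeta_9$ and $\sqrt[3]{9}$ generate $K$ over $\mathbb{Q}$, the pair $(a(\sigma),b(\sigma))$ determines $\sigma$, so $\psi$ is injective.

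Third --- and this is the only computation with real content --- I would pin down the semidirect-product structure. The crucial identity is $\sigma(\omega)=\sigma(\zeta_9^3)=\zeta_9^{3b(\sigma)}=\omega^{b(\sigma)}$, which says that $(\mathbb{Z}/9\mathbb{Z})^*$ acts on $\langle\omega\rangle\cong\mathbb{Z}/3\mathbb{Z}$ through the reduction map $(\mathbb{Z}/9\mathbb{Z})^*\to(\mathbb{Z}/3\mathbb{Z})^*$. Using it, $(\sigma\tau)(\sqrt[3]{9})=\sigma(\omega^{a(\tau)}\sqrt[3]{9})=\omega^{b(\sigma)a(\tau)+a(\sigma)}\sqrt[3]{9}$ while $(\sigma\tau)(\zeta_9)=\zeta_9^{b(\sigma)b(\tau)}$, so $a(\sigma\tau)=a(\sigma)+b(\sigma)a(\tau)$ and $b(\sigma\tau)=b(\sigma)b(\tau)$; this is exactly the multiplication rule in $\frac{\mathbb{Z}}{3\mathbb{Z}}\rtimes\left(\frac{\mathbb{Z}}{9\mathbb{Z}}\right)^*$ for this action, so $\psi$ is a homomorphism. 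Since both groups have order $18$ by the first step, the injective homomorphism $\psi$ is an isomorphism.

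I expect the only genuine obstacle to be the first step --- establishing $[K:\mathbb{Q}]=18$, equivalently the irreducibility of $t^3-9$ over $\mathbb{Q}(\zeta_9)$; once the degree is known, well-definedness, injectivity, the homomorphism property and surjectivity are all formal.
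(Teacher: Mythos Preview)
Your argument is correct. The paper itself does not give a proof of this lemma --- it merely states that it ``is easy to prove'' --- so there is nothing to compare against beyond noting that your proof supplies exactly the standard verification the author is implicitly invoking: Galois-ness of $K/\mathbb{Q}$, the degree count $[K:\mathbb{Q}]=18$, well-definedness and injectivity of $\psi$ on generators, and the cocycle computation $a(\sigma\tau)=a(\sigma)+b(\sigma)a(\tau)$, $b(\sigma\tau)=b(\sigma)b(\tau)$ that pins down the semidirect-product law. One small point worth making explicit (since the paper does not): the action of $(\mathbb{Z}/9\mathbb{Z})^*$ on $\mathbb{Z}/3\mathbb{Z}$ defining the semidirect product is the one through reduction mod $3$, as you correctly identify via $\sigma(\omega)=\omega^{b(\sigma)}$; the paper leaves this implicit in the notation.
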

\begin{proof}[Proof of Lemma~\ref{Thue}] First let $p$ be an unramified prime in $K$ and $\sigma_p$ the Frobenius automorphism associated to $p$, which is unique up to conjugation. Then
$\zeta_9^{b(\sigma_p)}=\sigma_p(\zeta_9)\equiv \zeta_9^p\pmod{\mathfrak{p}}$,
where $\mathfrak{p}$ is a prime above $p$. Since $p$ is unramified we conclude 
\begin{equation}\label{ram1}
b(\sigma_p)\equiv p\pmod{9}.
\end{equation}
With the same reason we have
$\omega^{a(\sigma)}9^{\frac{1}{3}}=\sigma_p(9^{\frac{1}{3}})\equiv 9^{\frac{p}{3}}\pmod{\mathfrak{p}}$ which implies that 
\begin{equation}\label{ram2}
(\omega^{a(\sigma)}-9^{\frac{p-1}{3}})\in\mathfrak{p}.
\end{equation}
Now let $p\equiv 1\pmod{9}$ and $9^{\frac{p-1}{3}}\equiv 1\pmod{p}$. Notice that $p$ is unramified since $\gcd(p,3)=1$. Therefore by Equations~\eqref{ram1} and~\eqref{ram2} we have
\begin{enumerate}[\upshape (i)]
\item $b(\sigma_p)\equiv p\equiv 1\pmod{9}$ and so $b(\sigma)=1$.
\item $\omega^{a(\sigma)}-9^{\frac{p-1}{3}}\in\mathfrak{p}$ which implies $\omega^{a(\sigma)}-1\in\mathfrak{p}$ and so $a(\sigma)=0$.
\end{enumerate}
Thus $\psi(\sigma_p)=(0,1)$ and so $\sigma_p$ is the identity element. This means $p$ splits completely. Conversely, if $p$ splits completely then $a(\sigma)=0$ and $b(\sigma)=1$, which implies
\begin{enumerate}[\upshape (i)]
\item $\zeta_9\equiv\zeta_9^p\pmod{\mathfrak{p}}$ and so $p\equiv 1\pmod{9}$.
\item $9^{\frac{1}{3}}\equiv 9^{\frac{p}{3}}\pmod{\mathfrak{p}}$ and so $\left(9^{\frac{p-1}{3}}-1\right)\in\mathfrak{p}\cap\mathbb{Z}=(p)$.
\end{enumerate}
This finishes the proof.
\end{proof}
Since we will use the Chebotarev density theorem several times, let us recall it briefly. Let $K$ be a number field and assume $L/K$ is a Galois extension. To each prime ideal $\mathfrak{p}$ of $K$  unramified  in  $L$  there  corresponds  a  certain  conjugacy  class  $\mathcal{C}$  of $\Gal(L/K)$
consisting of the set of Frobenius automorphisms $\sigma$ attached to the prime ideals $\mathfrak{P}$ of $L$ which  lie over $\mathfrak{p}$.  Denote this conjugacy class by the Artin  symbol  $\left(\frac{L/K}{\mathfrak{p}}\right)$. For a
given conjugacy class  $\mathcal{C}$ of $\Gal(L/K)$,  let  $\pi_\mathcal{C}(x)$ denote the number of prime ideals $\mathfrak{p}$ of $K$ unramified in $L$ such that $\left(\frac{L/K}{\mathfrak{p}}\right)\in\mathcal{C}$ and $N_{L/K}(\mathfrak{p})\leq x$. By abuse of notation, the Frobenius automorphism is also represented by the Artin symbol. By the Chebotarev density theorem (see~\cite{Neuk}, Theorem 13.4, or~\cite{Lag, SerreI}) we have  
\begin{equation}
\lim_{x\to\infty}\frac{\pi_\mathcal{C}(x)}{\pi(x)}=\frac{|\mathcal{C}|}{[L:K]}.
\end{equation}
\begin{proof}[Proof of Theorem~\ref{nonmono}:]
Note that for $p\equiv 1\pmod{9}$, $9^{\frac{p-1}{3}}\equiv 1\pmod{p}$ is equivalent to $9$ being a cube in $\mathbb{F}_p$. Lemma~\ref{Thue} and the Chebotarev density theorem imply
$$
\frac{1}{\pi(x)}\#\{p\leq x: p\equiv 1\pmod{9}, 9 \,\,\text{is not a cube in}\,\, \mathbb{F}_p\}\xrightarrow[{x \to \infty }]{} \frac{1}{6}-\frac{1}{18}=\frac{1}{9}.
$$
\end{proof}
Since $t^3-9$ is an irreducible polynomial a famous conjecture due to Bunyakovsky says that there should be infinitely many primes of the form $t^3-9$ which are congruent to $\pm1$ modulo $9$. These primes produce monogenic fields. This shows the difficulty of characterizing monogenic fields even for pure cubic extensions. Monogenicity of cyclic cubic fields has been studied by  Dummit and Kisilevsky~\cite{kis}.
\section{Prime splitting and monogenic fields}
In a monogenic field $K$, the field discriminant is equal to the discriminant of the minimal polynomial of $\alpha$, where $\mathcal{O}_K=\mathbb{Z}[\alpha]$. Also, by using Dedekind's Theorem (see Theorem~\ref{prime}), it is easy to see how a prime splits, by looking at how the minimal polynomial of $\alpha$ splits modulo primes.
\begin{theorem}[Dedekind]\label{prime}
Let $K$ be a number field such that $\mathcal{O}_K=\mathbb{Z}[\alpha]$ for some $\alpha$ with the minimal polynomial $f(x)$. Let $\bar{f}(x)=\bar{p}_1(x)^{e_1}\cdots \bar{p}_g(x)^{e_g}$ be the factorization of the polynomial $\bar{f}(x)=f(x) \pmod{p}$ into irreducibles $\bar{p}_i(x)=p_i(x)\pmod{p}$, with all $p_i(x)\in\mathbb{Z}[x]$ monic. Then $\mathfrak{p}_i=p\mathcal{O}_K+p_i(\alpha)\mathcal{O}_K$ are the different prime ideals above $p$. Moreover $p\mathcal{O}_K=\mathfrak{p}_1^{e_1}\cdots\mathfrak{p}_g^{e_g}$.
\end{theorem}
\begin{proof}
See~\cite{Neuk} Proposition 8.3.
\end{proof}
Hence it is natural to see how prime splitting forces a number field to be non-monogenic. This idea was first noticed by Hensel. Indeed he constructed a family of $\mathbb{Z}/(3\mathbb{Z})$-extensions over $\mathbb{Q}$ such that $2$ splits completely, and since in $\mathbb{F}_2[t]$ there are only two linear polynomials, by invoking Theorem~\ref{prime}, he deduced that these fields are non-monogenic.

Hensel's idea can be extended easily to construct infinitely many non-monogenic Abelian number fields. Indeed let $l\equiv 1\pmod{n}$ be a prime and assume $n\geq 3$. Denote the unique $\mathbb{Z}/(n\mathbb{Z})$-subfield of $\mathbb{Q}(\zeta_l)$ by $K_n(l)$. The same method used to prove Lemma~\ref{cheb} shows that a prime $p$ splits completely in $K_n(l)$ if and only if $p\neq l$ and $t^n-p$ has a solution in $\mathbb{F}_l$. With these facts and Henel's idea, we can deduce that $K_n(l)$ is non-monogenic if $t^n-2$ has a solution in $\mathbb{F}_l$. Notice that different $l$ produces different $\mathbb{Z}/(n\mathbb{Z})$-extension since the discriminant of $K_n(l)$ is a function of $l$. Consider the Kummer extension $\mathbb{Q}(\zeta_n,\sqrt[n]{2})$. It is easy to show that $\Gal(\mathbb{Q}(\zeta_n,\sqrt[n]{2})/\mathbb{Q})$ can be embedded into the group $\left(\mathbb{Z}/(n\mathbb{Z})\right)\rtimes \left(\mathbb{Z}/(n\mathbb{Z})\right)^*$. Moreover one can show that a prime $l$ splits completely in $\mathbb{Q}(\zeta_n,\sqrt[n]{2})$ if and only if $l\equiv 1\pmod{n}$ and $t^n-2$ has a solution in $\mathbb{F}_l$. Then by the inequality $[\mathbb{Q}(\zeta_n,\sqrt[n]{2}):\mathbb{Q}]\leq n\varphi(n)$ and the Chebotarev density theorem we obtain:
\begin{align*}
\lim_{x\to\infty}\frac{1}{\pi(x)}\#\{l\leq x: l\equiv 1\pmod{n}, K_n(l) \text{\, is non-monogenic}\}\geq\frac{1}{n\varphi(n)}.
\end{align*}
We can extend this idea further. For a prime number $l\geq 3$ the field $K:=\mathbb{Q}(\zeta_{l^2})$ is a Galois extension with cyclic Galois group $\left(\mathbb{Z}/(l^2\mathbb{Z})\right)^*$. Let $\eta$ be a generator of this group and set $H:=\langle\eta^l\rangle$. Denote by $K_l$ the fixed field of $H$. Then $[K_l:\mathbb{Q}]=l$ and $\Gal(K/K_l)\cong H$.
\begin{lemma}\label{cheb}
$p$ splits completely in $K_l$ if and only if $p^{l-1}\equiv 1\pmod{l^2}$.
\end{lemma}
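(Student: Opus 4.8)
The plan is to translate \textquotedblleft $p$ splits completely in $K_l$\textquotedblright{} into a condition on the Frobenius at $p$ inside $\Gal(K/\mathbb{Q})$, and then read that condition off from the cyclic structure of $(\mathbb{Z}/l^2\mathbb{Z})^*$, exactly in the spirit of the proof of Lemma~\ref{Thue}.

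First I would fix the canonical isomorphism $\Gal(K/\mathbb{Q}) \xrightarrow{\sim} (\mathbb{Z}/l^2\mathbb{Z})^*$ sending $\sigma$ to the residue $a_\sigma$ determined by $\sigma(\zeta_{l^2}) = \zeta_{l^2}^{a_\sigma}$. Under this map the chosen generator $\eta$ corresponds to a generator of the cyclic group $(\mathbb{Z}/l^2\mathbb{Z})^*$, whose order is $\varphi(l^2) = l(l-1)$; hence $\eta^l$ has order $l-1$, and $H = \langle \eta^l\rangle$ is the unique subgroup of order $l-1$. In a cyclic group the unique subgroup of order $d$ is the set of solutions of $x^d = 1$, so $H$ corresponds precisely to $\{\, x \in (\mathbb{Z}/l^2\mathbb{Z})^* : x^{l-1} \equiv 1 \pmod{l^2}\,\}$. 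This also re-confirms $[K_l:\mathbb{Q}] = l(l-1)/(l-1) = l$.

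Since $\Gal(K/\mathbb{Q})$ is abelian, $H$ is normal, so $K_l/\mathbb{Q}$ is itself Galois with $\Gal(K_l/\mathbb{Q}) \cong \Gal(K/\mathbb{Q})/H$. For a prime $p \neq l$, unramified in $K$, the Frobenius $\sigma_p \in \Gal(K/\mathbb{Q})$ is a well-defined element (the extension is abelian) and corresponds under the isomorphism above to the class of $p$ modulo $l^2$; its image in $\Gal(K_l/\mathbb{Q})$ is the Frobenius of $p$ in $K_l/\mathbb{Q}$. Now $p$ splits completely in $K_l$ exactly when the decomposition group of a prime of $K$ above $p$, namely the cyclic group $\langle \sigma_p\rangle$, is contained in $\Gal(K/K_l) = H$, and for a cyclic decomposition group this just says $\sigma_p \in H$. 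Combining this with the previous paragraph, $p$ splits completely in $K_l$ if and only if $p^{l-1} \equiv 1 \pmod{l^2}$.

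Finally I would dispose of the ramified case $p = l$: the prime $l$ is totally ramified in $K = \mathbb{Q}(\zeta_{l^2})$, hence ramified in the subfield $K_l$, so it does not split completely; on the other side $l^{l-1} \equiv 0 \not\equiv 1 \pmod{l^2}$, so both sides of the claimed equivalence are false and the statement holds vacuously. I do not expect any real obstacle here; the only point needing a line of care is the dictionary \textquotedblleft splits completely $\iff$ Frobenius lies in the subgroup fixing $K_l$\textquotedblright{}, which is the standard decomposition-group argument and is essentially the same computation already carried out for Lemma~\ref{Thue}.
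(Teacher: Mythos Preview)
Your argument is correct and follows essentially the same route as the paper: identify $\Gal(K/\mathbb{Q})$ with $(\mathbb{Z}/l^2\mathbb{Z})^*$ so that $\sigma_p$ corresponds to $p\bmod l^2$, and then observe that $p$ splits completely in $K_l$ iff $\sigma_p\in H=\langle\eta^l\rangle$. The only cosmetic differences are that you characterize $H$ upfront as $\{x:x^{l-1}\equiv1\pmod{l^2}\}$ (the paper instead writes $p\equiv\eta^{tl}$ and raises to the $(l-1)$-th power), and you explicitly dispose of the ramified prime $p=l$, which the paper tacitly excludes.
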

\begin{proof}[Proof:]
Assume that a prime $p$ split completely in $K_l$ and let $\sigma_p:=\left(\frac{K/\mathbb{Q}}{p}\right)$ be the Frobenius automorphism associated to $p$. Then $\left.\sigma_p\right|_{K_l}=\left.\left(\frac{K/\mathbb{Q}}{p}\right)\right|_{K_l}=\left(\frac{K_l/\mathbb{Q}}{p}\right)=id$ which implies $\sigma_p\in \Gal(K/K_l)\cong H$. Notice that $p\neq l$ is unramified in $K$. Thus  $\sigma_p(\zeta_{l^2})=\zeta_{l^2}^p$ since $\sigma_p(\zeta_{l^2})\equiv\zeta_{l^2}^p\pmod{\frak{p}}$, where $\frak{p}$ is a prime in $K$ above $p$. Under the canonical isomorphism $$\Gal(K/\mathbb{Q})\cong\left(\frac{\mathbb{Z}}{l^2\mathbb{Z}}\right)^*,$$ we observe that $p\in H=\langle\eta^l\rangle$. So for some integer $t$ we have
$p\equiv\eta^{tl}\pmod{l^2}$ that implies $p^{l-1}\equiv 1\pmod{l^2}$.

Conversely, let $p^{l-1}\equiv 1\pmod{l^2}$ and assume that $p\equiv\eta^t\pmod{l^2}$ for some integer $t$. Then
$p^{l-1}\equiv\eta^{t(l-1)}\equiv 1\pmod{l^2}$ that implies $l\mid t$.
Hence $p\in H$ and so $\sigma_p\in \Gal(K/K_l)$. Therefore $\left(\frac{K_l/\mathbb{Q}}{p}\right)=id$ which means $p$ splits completely in $K_l$.
\end{proof}
\begin{corr}
Let $l\geq 3$ be a prime number such that for some prime $p<l$ we have $p^{l-1}\equiv 1\pmod{l^2}$. Then $K_l$ is non-monogenic.
\end{corr}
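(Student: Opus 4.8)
The plan is to run Hensel's splitting-obstruction argument, using Lemma~\ref{cheb} to detect a completely split prime and Theorem~\ref{prime} (Dedekind's factorization criterion) to translate complete splitting into a factorization of the minimal polynomial modulo that prime. By hypothesis there is a prime $p<l$ with $p^{l-1}\equiv 1\pmod{l^2}$, so Lemma~\ref{cheb} gives that $p$ splits completely in $K_l$. Since $[K_l:\mathbb{Q}]=l$, this means $p\mathcal{O}_{K_l}=\mathfrak{P}_1\cdots\mathfrak{P}_l$ is a product of $l$ \emph{distinct} prime ideals, each of residue degree $1$; in particular $p$ is unramified in $K_l$, which is consistent with the fact that $p\neq l$ and $K_l\subseteq\mathbb{Q}(\zeta_{l^2})$ is ramified only at $l$.

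Next I would argue by contradiction: suppose $K_l$ is monogenic, say $\mathcal{O}_{K_l}=\mathbb{Z}[\alpha]$, and let $f(t)\in\mathbb{Z}[t]$ be the minimal polynomial of $\alpha$, a monic polynomial of degree $l$. Theorem~\ref{prime} applies precisely because $\mathcal{O}_{K_l}=\mathbb{Z}[\alpha]$, and it tells us that the prime factorization of $p\mathcal{O}_{K_l}$ mirrors the factorization of $\bar f\in\mathbb{F}_p[t]$ into irreducibles. Complete splitting of $p$ therefore forces $\bar f$ to be a product of $l$ pairwise distinct monic linear polynomials over $\mathbb{F}_p$.

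Finally I would invoke the elementary counting bound underlying Hensel's observation: there are exactly $p$ monic linear polynomials in $\mathbb{F}_p[t]$, namely $t-a$ for $a\in\mathbb{F}_p$. Since $p<l$, the polynomial $\bar f$ cannot have $l$ distinct monic linear factors, a contradiction. Hence no such $\alpha$ exists and $K_l$ is non-monogenic.

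There is essentially no serious obstacle; the two places deserving a line of care are confirming that Theorem~\ref{prime} is legitimately applicable (it is, by the very monogenicity assumption being contradicted) and confirming that the $\mathfrak{P}_i$ above $p$ are genuinely distinct with residue degree one (which is exactly what ``splits completely'' means, and which Lemma~\ref{cheb} delivers). If desired, one could append a remark noting that the hypothesis is satisfied, e.g., by any prime $p$ and any $p$-Wieferich prime $l$, so the Corollary produces genuine families of non-monogenic cyclic fields of prime degree $l$.
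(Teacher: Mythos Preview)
Your argument is correct and is precisely the approach the paper has in mind: the Corollary is stated immediately after Lemma~\ref{cheb} as a direct consequence of Hensel's splitting obstruction, and your proof spells out exactly that---Lemma~\ref{cheb} forces $p$ to split completely in $K_l$, Theorem~\ref{prime} would then require $l$ distinct linear factors of $\bar f$ over $\mathbb{F}_p$, and $p<l$ makes this impossible. The paper leaves the proof implicit, but your write-up matches its intended reasoning.
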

As an application of Lemma~\ref{cheb} and the Chebotarev density theorem, one can calculate the density of $\#\{p\leq x: p^{l-1}\not\equiv 1\pmod{l^2}\}$ which will be used in the proof of Theorem~\ref{main}.
\begin{theorem}\label{ch} With the notations of Lemma~\ref{cheb} we have
$$\#\left\{p\leq x: p^{l-1}\not\equiv 1\pmod{l^2}\right\}=\frac{l-1}{l}\pi(x)(1+o(1)).$$
\end{theorem}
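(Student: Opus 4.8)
The plan is to reinterpret the congruence $p^{l-1}\equiv 1\pmod{l^2}$ as a complete-splitting condition in a fixed number field, and then to invoke the Chebotarev density theorem. Since $\Gal(\mathbb{Q}(\zeta_{l^2})/\mathbb{Q})\cong(\mathbb{Z}/l^2\mathbb{Z})^*$ is cyclic, every subgroup is normal; in particular the field $K_l$ of Lemma~\ref{cheb} is Galois over $\mathbb{Q}$, with $\Gal(K_l/\mathbb{Q})$ cyclic of order $l$. (One could equally well argue directly inside $K=\mathbb{Q}(\zeta_{l^2})$, using that the subgroup $H=\langle\eta^l\rangle$ has order $l-1$.)

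First I would discard the single ramified prime $p=l$, which influences neither side of the claimed asymptotic. For every remaining prime, Lemma~\ref{cheb} provides the equivalence $p^{l-1}\equiv 1\pmod{l^2}$ if and only if $p$ splits completely in $K_l$, so that
$$
\#\{p\le x: p^{l-1}\equiv 1\pmod{l^2}\}=\#\{p\le x: p\ \text{splits completely in}\ K_l\}+O(1).
$$

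Next I would apply the Chebotarev density theorem to the Galois extension $K_l/\mathbb{Q}$ with the conjugacy class $\mathcal{C}=\{\mathrm{id}\}$: a prime splits completely exactly when its Frobenius class is trivial, and $|\mathcal{C}|=1$, whence
$$
\#\{p\le x: p\ \text{splits completely in}\ K_l\}=\frac{1}{[K_l:\mathbb{Q}]}\pi(x)(1+o(1))=\frac{1}{l}\pi(x)(1+o(1)).
$$
Subtracting this count from $\pi(x)$ then gives
$$
\#\{p\le x: p^{l-1}\not\equiv 1\pmod{l^2}\}=\pi(x)-\frac{1}{l}\pi(x)(1+o(1))=\frac{l-1}{l}\pi(x)(1+o(1)),
$$
which is the assertion.

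There is no genuinely difficult step here: all the real content sits in Lemma~\ref{cheb}. The only points needing a word of care are the observation that $K_l/\mathbb{Q}$ is Galois, so that Chebotarev (or even just Dirichlet's theorem on primes in arithmetic progressions) applies, and the harmless removal of the ramified prime $p=l$. As an elementary substitute for Chebotarev, one may instead observe that $a\mapsto a^{l-1}$ maps $(\mathbb{Z}/l^2\mathbb{Z})^*$ onto its unique subgroup of order $l$, with kernel of size $l-1$; hence the condition $p^{l-1}\equiv 1\pmod{l^2}$ singles out exactly $l-1$ of the $\varphi(l^2)=l(l-1)$ residue classes modulo $l^2$, and Dirichlet's theorem yields the same density $1-\frac{l-1}{l(l-1)}=\frac{l-1}{l}$ for the complement.
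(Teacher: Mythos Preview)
Your proof is correct and follows essentially the same approach as the paper: both invoke Lemma~\ref{cheb} to identify the condition with complete splitting in $K_l$ and then apply Chebotarev. The only cosmetic difference is that the paper applies Chebotarev directly to the complement $\mathcal{C}=\Gal(K_l/\mathbb{Q})\setminus\{e\}$, whereas you apply it to $\{\mathrm{id}\}$ and subtract; your additional remark on the Dirichlet-based alternative matches the paper's own remark following the theorem.
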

\begin{proof}[Proof:]
Note that $\mathcal{C}=\Gal(K_l/\mathbb{Q})-\{e\}$ is stable under conjugation, where $e$ is the identity element in the Galois group, and $\mathcal{C}$ corresponds to the set of non-split primes by Lemma~\ref{cheb}. Then the Chebotarev density theorem implies our theorem.
\end{proof}
\begin{remark}
Let $K/\mathbb{Q}$ be a cyclic extension of prime degree $l\geq 5$. Gras~\cite{Gr} in her beautiful paper, using a result of Leopoldt, showed that $K$ is non-monogenic  unless  $2l+1=p$ is a prime and $K=\mathbb{Q}(\zeta_p+\zeta_p^{-1})$.
\end{remark}
\section{Proof of Theorem~\ref{main}}
Recall that a polynomial $f(t)=t^n+a_{n-1}t^{n-1}+\cdots+a_1t+a_0$ is called an {\it Eisenstein polynomial at a prime $p$} when $p\mid a_i$ for all $0\leq i\leq n-1$ and $p^2\nmid a_0$.
Let $f(t)=t^n+a_{n-1}t^{n-1}+\cdots+a_1t+a_0$ be an Eisenstein polynomial at $p$, and let $K=\mathbb{Q}(\alpha)$ be a field generated by a root of $f(t)$. We will show that for any integers, $c_0, c_2,\dots, c_{n-1}$ we have the following formula
\begin{equation}\label{eq1}
N_{K/\mathbb{Q}}(c_0+c_1\alpha+\dots+c_{n-1}\alpha^{n-1})\equiv c_0^n \pmod{p},
\end{equation}
from which we deduce the following well-know result (see also~\cite{shaf}, section 2.5, Exercise 19).
\begin{lemma}\label{div}
Suppose $K = \mathbb{Q}(\alpha)$, where $\alpha$ is a root of an Eisenstein polynomial at $p$. Then we have $p\nmid [\mathcal{O}_K:\mathbb{Z}[\alpha]]$.
\end{lemma}
\begin{proof}[Proof:]
Suppose $p|[\mathcal{O}_K,\mathbb{Z}[\alpha]]$. Therefore there exists an algebraic integer $\theta\in\mathcal{O}_K\setminus \mathbb{Z}[\alpha]$, such that $p\theta\in\mathbb{Z}[\alpha]$. Hence for some integers $c_0,c_1,\dots, c_{n-1}$ we have $p\theta=c_0+c_1\alpha+\dots+c_{n-1}\alpha^{n-1}$,
and so
$$
p^nN_{K/\mathbb{Q}}(\theta)=N_{K/\mathbb{Q}}(c_0+c_1\alpha+\dots+c_{n-1}\alpha^{n-1})\equiv c_0^n\pmod{p},
$$
which implies $p\mid c_0$. Note that $p\mid N_{K/\mathbb{Q}}(\alpha)$ and $p^2\nmid N_{K/\mathbb{Q}}(\alpha)$. Therefore this process and Equation~\eqref{eq1} imply $p\mid c_i$ for all $i$, which is a contradiction. To complete the proof it remains to prove Equation~\eqref{eq1}. Let $E$ be the Galois closure of $K$, and assume $\mathfrak{p}$ is a prime in $E$ above $p$. Since $p\mid a_i$, then $\alpha_i\in\mathfrak{p}$, where $\alpha=\alpha_1,\alpha_2,\dots,\alpha_n$ are the conjugates of $\alpha$. Note that
$$
N_{K/\mathbb{Q}}(c_0+c_1\alpha+\dots+c_{n-1}\alpha^{n-1})=\prod_{i=1}^n(c_0+c_1\alpha_i+\dots+c_{n-1}\alpha_i^{n-1}),
$$
which implies that
$N_{K/\mathbb{Q}}(c_0+c_1\alpha+\dots+c_{n-1}\alpha^{n-1})-c_0^n\in \mathfrak{p}\cap \mathbb{Z}=(p)$.
\end{proof}
Lemma~\ref{div} will allow us to find an arithmetic condition on $p$ such that $f_p(t)$, defined in Theorem~\ref{main}, produces a monogenic field.
\begin{proof}[Proof of Theorem~\ref{main}:]
Let $K:=E_{f_p}=\mathbb{Q}(\alpha)$ be the field obtained by adjoining a root of $f_p(t)$ to $\mathbb{Q}$. Since $f_p(t)$ is an Eisenstein polynomial at $p$, we have that $p\nmid [\mathcal{O}_K:\mathbb{Z}[\alpha]]$.
It is easy to show that $|Disc(f_p)|=q^qp^{q-1}$. For $p^{q-1}\not\equiv 1 \pmod{q^2}$ we see that
$$
f_{p}(t+p)=(t+p)^q-p=t^q+\binom{q}{1}pt^{q-1}+\cdots+\binom{q}{q-1}p^{q-1}t+(p^q-p),
$$
from which we deduce $f_p(t+p)$ is an Eisenstein polynomial at the prime $q$ and so by Lemma~\ref{div} we obtain
$$
q\nmid [\mathcal{O}_K:\mathbb{Z}[\alpha-p]]=[\mathcal{O}_K:\mathbb{Z}[\alpha]]
.$$ 
Therefore $f_p(t)$ is monogenic by Lemma~\ref{disc}. Thus
$$
\#\{p\leq x: f_p(t) \text{\,is monogenic}\}\geq\#\{p\leq x: p^{q-1}\not\equiv 1 \pmod{q^2}\},
$$
which combined with Theorem~\ref{ch} proves our theorem.
\end{proof}
Theorem~\ref{main} can also be proven by Dirichlet's theorem on primes in arithmetic progressions. We thank Andrew Granville for pointing out this to the author. Indeed, for $1\leq i\leq q-1$, consider the following change of variable
\begin{equation*}
f_p(t+i)=(t+i)^q-p=t^q+\sum_{j=1}^{q-1}\binom{q}{j}t^ji^{q-j}+(i^q-p).
\end{equation*}
So to obtain an Eisenstein polynomial at $q$, we need to have the conditions $p\equiv i^q\equiv i\pmod{q}$
and $p\not\equiv i^q\pmod{q^2}$ that also imply $p^{q-1}\not\equiv 1\pmod{q^2}$. By the prime number theorem in arithmetic progressions, we get
\begin{align*}
&\lim_{x\to \infty}\frac{1}{\pi(x)}\{p\leq x: p\equiv i\pmod{q}, p\not\equiv i^q\pmod{q^2}\}=\\
&\lim_{x\to \infty}\frac{1}{\pi(x)}\{p\leq x: p\equiv i^q+qs\pmod{q^2}, 1\leq s\leq q-1\}=\frac{q-1}{q(q-1)}=\frac{1}{q}.
\end{align*}
This also proves Theorem~\ref{main}.

It should be mentioned that the simple change of variable $x+1$ also gives interesting examples.
Let $m=2^k$ be a power of $2$, and assume  $p\equiv 3\pmod{4}$ is a prime. Then $f(x)=x^m-p$ is an Eisenstein polynomial at $p$, with discriminant $-m^mp^{m-1}$. We now remark that $k\binom{m}{k}=m\binom{m-1}{k-1}$,
 which implies $f(x+1)=(x+1)^m-p=x^m+\sum_{j=1}^{m-1}\binom{m}{j}x^j+(1-p)$,
is an Eisenstein polynomial at $2$ and therefore
$
2\nmid [\mathcal{O}_K:\mathbb{Z}[\sqrt[m]{p}-1]]=[\mathcal{O}_K:\mathbb{Z}[\sqrt[m]{p}]]$.
 So $\mathbb{Q}(\sqrt[m]{p})$ is a monogenic number field.
\section{Some final remarks}
We can also fix a prime $p$ and vary $q$ in $t^q-p$. For example, when $p=2$, we want to know for which primes $q$, $t^q-2$ is monogenic. We should therefore study the distribution of primes $q$ such that $2^{q-1}\not\equiv 1 \pmod{q^2}$.
This is an interesting question, as it can be shown that  if  $2^{q-1}\not\equiv 1 \pmod{q^2}$, then the first case of Fermat's Last Theorem holds. Indeed, we expect that there are only few primes $q$ such that $2^{q-1}\equiv 1 \pmod{q^2}$. As far as I know, $1093$  and $3511$  are the  only primes  known to satisfy this  relation. For a number field $K$, let $\zeta_K(s)$ be the Dedekind zeta function of $K$ and assume its Laurent expansion at $s=1$ is
\begin{equation*}
\zeta_K(s)=c_{-1}(s-1)^{-1}+c_0+c_1(s-1)+\cdots\quad\quad (c_{-1}\neq 0).
\end{equation*}
Ihara~\cite{Ihara} in his interesting paper defined an analogue to the Euler-Kronecker constant
$\gamma_K:=c_0/c_{-1}$,
which is the same as the usual Euler constant for $K=\mathbb{Q}$. Let $K_q$ be the field we defined in Lemma~\ref{cheb} ($q=l$) and denote $\gamma_q:=\gamma_{K_q}$.
\begin{theorem}[Ihara~\cite{Ihara}] Assuming GRH,
if $\liminf \frac{\gamma_q}{q}=0$, then for each prime $p$, there are finitely many $q$ such that
$p^{q-1}\equiv 1\pmod{q^2}$.
\end{theorem}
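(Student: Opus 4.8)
The plan is to read the congruence through Lemma~\ref{cheb} and then to detect the resulting prime splitting inside the Euler--Kronecker constant $\gamma_q$, using a prime-counting estimate for $K_q$ that is sharp enough precisely because of GRH. The two inputs about $K_q$ that I would record first are: by Lemma~\ref{cheb}, the condition $p^{q-1}\equiv 1\pmod{q^2}$ holds exactly when $p$ splits completely in $K_q$, and in that case there are precisely $q$ prime ideals of $K_q$ above $p$, each of norm $p$; and, since $K_q$ is cut out by $q-1$ Dirichlet characters all of conductor $q^2$ (together with the trivial character), the conductor--discriminant formula gives $|d_{K_q}|=q^{2(q-1)}$, so $\log|d_{K_q}|=2(q-1)\log q$. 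The latter is the quantity that governs the size of $\gamma_q$, which is why the hypothesis is normalised by $q$.

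Next I would invoke the GRH-conditional generalised Mertens formula for $K_q$. The constant term in the Laurent expansion of $-\zeta_{K_q}'/\zeta_{K_q}$ at $s=1$ is $-\gamma_q$; partial summation from $\psi_{K_q}$ together with the explicit formula, all zeros lying on the critical line, gives
$$
\gamma_q=\log X-\sum_{N\mathfrak{p}^{k}\le X}\frac{\log N\mathfrak{p}}{N\mathfrak{p}^{k}}+O\!\left(\frac{\log|d_{K_q}|}{\sqrt{X}}\right),
$$
with an absolute implied constant. Choosing $X$ to be a fixed power of $\log|d_{K_q}|$ makes the error term $o(1)$, while $\log X\ll\log q=o(q)$, both uniformly in $q$.

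Now comes the point of the argument. Suppose a fixed prime $p$ satisfies $p^{q-1}\equiv 1\pmod{q^2}$ for some large $q$. Then $p\le X$, and the $q$ degree-one prime ideals above $p$ already contribute $q\log p/p$ to the truncated sum, so the displayed identity gives
$$
\gamma_q\le\log X-\frac{\log p}{p}\,q+o(1)\le-\frac{\log p}{2p}\,q
$$
once $q$ is large. Hence $\gamma_q/q\le-\tfrac12(\log p)/p<0$ for every sufficiently large $q$ in the set $S_p:=\{q:\,p^{q-1}\equiv 1\pmod{q^2}\}$. If $S_p$ were infinite this would force $\liminf_{q}\gamma_q/q\le-\tfrac12(\log p)/p<0$, contradicting $\liminf_{q}\gamma_q/q=0$. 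Therefore $S_p$ is finite for every prime $p$, which is the assertion.

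The step I expect to be the main obstacle is the displayed Mertens-type identity with a remainder that is uniform in $q$ and depends on the discriminant of $K_q$ only logarithmically: this is exactly what GRH buys, and it is what lets the prime sum be truncated so early that the ``main term'' $\log X$ stays of size $\log q$, far below the gain of order $q$ produced by the single completely split prime. With only an unconditional zero-free region the admissible $X$ would be essentially exponential in a power of $q$, and $\log X$ would overwhelm that gain, so the argument would break down. The remaining points---the single tamely ramified prime $q$ in $K_q$ and the higher prime powers in the Mertens sum---are routine and do not affect the leading behaviour.
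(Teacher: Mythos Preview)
The paper does not supply a proof of this statement: it is quoted as Corollary~3 of Ihara's paper~\cite{Ihara} and used as a black box. So there is no ``paper's own proof'' to compare against.

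That said, your sketch is essentially Ihara's original argument and is correct in outline. The two substantive inputs are (i) that $p^{q-1}\equiv 1\pmod{q^2}$ forces $p$ to split completely in $K_q$ (Lemma~\ref{cheb}), contributing $q\cdot(\log p)/p$ to the prime sum, and (ii) a GRH-conditional Mertens-type identity for $K_q$ with error controlled by $\log|d_{K_q}|$ (and the degree $[K_q:\mathbb{Q}]=q$, which you should not suppress, though here it is dominated by $\log|d_{K_q}|\asymp q\log q$). With $X$ a fixed power of $q$ the error and the $\log X$ term are both $o(q)$, while the split prime $p$ pulls $\gamma_q$ down by a term of exact order $q$; this yields $\gamma_q/q\le -c_p<0$ along any infinite $S_p$, contradicting $\liminf\gamma_q/q=0$. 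The only point to tighten is the precise form of the remainder in the displayed identity: Ihara's bound carries a factor of the degree as well as $\log|d_{K_q}|$, but since both are $O(q\log q)$ your conclusion is unaffected.
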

Therefore by considering these assumptions we see,  for a fixed $p$, most of the time $t^q-p$ is monogenic.
These primes are called Wieferich primes. Granville and Soundararajan~\cite{Gran} in their paper related these primes to a conjecture of Erd{\H o}s asking if every positive integer is the sum of a square-free number and a power of $2$.

For a given prime $q\geq 3$, it would be interesting to classify the monogenicity of $K_p:=\mathbb{Q}(\zeta_q,\sqrt[q]{p})$ when $p\neq q$ varies. Chang~\cite{chang} considered this problem for $q=3$ and proved that $\mathbb{Q}(\sqrt[3]{2},\omega)$ is essentially the only monogenic field among the family $\mathbb{Q}(\sqrt[3]{p},\omega)$. However, it seems that for $q\geq 5$ the question is more delicate.
\section*{Acknowledgments}
I would like to thank Igor Shparlinski and Hershy Kisilevsky for the useful comments and Andrew Granville for his encouragements and advices.
\bibliographystyle{plain}
\bibliography{Mono}
\end{document}